 \DeclareMathOperator{\tor}{Tor}
\DeclareMathOperator{\Span}{Span}
\newcommand{\col}{\,{:}\,}
\newcommand{\tth}{^{\operatorname{th}}}
\theoremstyle{plain}
\newtheorem{thm}{Theorem}
\newtheorem{lem}[thm]{Lemma}
\newtheorem{prop}[thm]{Proposition}
\theoremstyle{definition}
\newtheorem{defn}[thm]{Definition}
\newtheorem*{exmp}{Example}
\newtheorem*{rem}{Remark}
\theoremstyle{remark}
\def\P{\mathbb{P}}
\def\A{\mathbb{A}}
\def\N{\mathbb{N}}
\def\C{\mathbb{C}}
\begin{document}
    \title{Effectivity of Dynatomic cycles for morphisms of projective varieties using deformation theory}
    \author[Hutz]{Benjamin Hutz}
    \address{The Graduate Center, The City University of New York, New York, NY 10016 USA}
    \email{bhutz@gc.cuny.edu}

    \thanks{The author would like to thank Dan Abramovich for suggesting the method, Bjorn Poonen for several fruitful discussions on the topic, and Xander Faber for reading an earlier version.}

\keywords{periodic point, dynatomic polynomial, dynamical system}

\subjclass[2010]{
Primary: 37P35, 
37P55} 


    \begin{abstract}
         Given an endomorphism of a projective variety, by intersecting the graph and the diagonal varieties we can determine the set of periodic points.  In an effort to determine the periodic points of a given minimal period, we follow a construction similar to cyclotomic polynomials.  The resulting zero-cycle is called a dynatomic cycle and the points in its support are called formal periodic points.  This article gives a proof of the effectivity of dynatomic cycles for morphisms of projective varieties using methods from deformation theory.
    \end{abstract}

\maketitle

     \section{Introduction}
        Consider an analytic function $\phi:\C^N \to \C^N$ given by
        \begin{equation*}
            [z_1,\ldots,z_N] \mapsto [\phi_1(z_1,\ldots,z_N),\ldots,\phi_N(z_1,\ldots,z_N)].
        \end{equation*}
        We can iterate the function $\phi$ to create a (discrete) dynamical system and denote the \emph{$n\tth$ iterate} as $\phi^n = \phi(\phi^{n-1})$.  The \emph{periodic points} of $\phi$ are the points $P \in \C^N$ such that $\phi^n(P) = P$ for some integer $n$.  We call $n$ the \emph{period} of $P$ and the least such $n$ the \emph{minimal} period of $P$.  Denote the coordinate functions of the $n\tth$ iterate as $\phi^n=[\phi_1^n,\ldots,\phi^n_N]$.  The set of periodic points of period $n$, but not necessarily minimal period $n$, for $\phi$ is the set of solutions to the system of equations
        \begin{equation*}
            \phi_i^n(z_1,\ldots,z_N) = z_i \quad \text{for } 1 \leq i \leq N.
        \end{equation*}
        To find the points of minimal period $n$, we could attempt to remove the points of period strictly less than $n$ from this set.  In the case of $\phi(z) \in \C[z]$, we can do this through division, as with cyclotomic polynomials. Consider the zeros of
        \begin{equation} \label{eq1}
            \prod_{d \mid n} (\phi^d(z)-z)^{\mu\left(\frac{n}{d}\right)}
        \end{equation}
        where $\mu$ is the M\"{o}bius function defined as $\mu(1) = 1$ and
        \begin{equation*}
            \mu(n) = \begin{cases}
              (-1)^{\omega} & n \text{ is square-free with $\omega$ distinct prime factors}\\
              0 & n \text{ is not square-free}.
            \end{cases}
        \end{equation*}
         Two fundamental questions come to mind.  First, are the zeros of the resulting function exactly the set of periodic points of minimal period $n$? Unfortunately, the answer is no.  For example for $\phi(z) = z^2 - \frac{3}{4}$ and $n=2$ we have
         \begin{equation*}
            \frac{\phi^2(z)-z}{\phi(z)-z} = \left(z+\frac{1}{2}\right)^2,
         \end{equation*}
         but that $-\frac{1}{2}$ is a fixed point.  However, it is true that all of the points of minimal period $n$ are among the zeros, demonstrated for single variables polynomials in \cite[Theorem 2.4]{Morton} and for morphisms of projective varieties by the author in \cite[Proposition 4.1]{Hutz1}.  Secondly, does the resulting function have poles as well as zeros?  Fortunately, the answer is no, demonstrated in the single variable polynomial case in \cite[Theorem 2.5]{Morton}, for automorphisms of curves and automorphisms of $\P^N$ in \cite{Silverman6}, and for morphisms of projective varieties by the author in \cite{Hutz1}.  The purpose of this article is to give a new proof of the nonexistence of poles.  The deformation argument used here leads to a simplified proof compared to \cite{Hutz1}, but lacks the detailed multiplicity information that allowed for the additional results presented there.

        We now state the problem precisely.  Let $K$ be an algebraically closed field and $X/K$ a projective variety of dimension $b$.  Let $\phi: X/K \to X/K$ be a morphism defined over $K$.  We can iterate the morphism $\phi$ and consider the resulting dynamical system.  As we will require tools from both dynamical systems and algebraic geometry in which the word cycle has two different meanings, we adopt the terminology: \emph{periodic cycle} to be the points in the orbit of a periodic point and \emph{algebraic zero-cycle} as a formal sum of points with integer multiplicities (only finitely many non-zero).  If all of the multiplicities of an algebraic zero-cycle are non-negative, we call it \emph{effective}.  For example, for $\phi \in K[z]$, if the algebraic zero-cycle of periodic points of period $n$ is effective, then the function $\phi^n(z)-z$ has no poles.  To generalize construction (\ref{eq1}) we follow \cite{Silverman6} and consider the graph of $\phi^n$ in the product variety $X \times X$ defined as
        \begin{equation*}
            \Gamma_n = \{(x,\phi^{n}(x)) \col x \in X\}
        \end{equation*}
        and the diagonal in $X \times X$ defined as
        \begin{equation*}
            \Delta = \{(x,x) \col x \in X\}.
        \end{equation*}
        Their intersection is precisely the periodic points of period $n$, and we can determine the multiplicity of points as the multiplicity of the intersection.  Denote the intersection multiplicity of $\Gamma_n$ and $\Delta$ at a point $(P,P) \in X \times X$ to be $a_P(n)$ and, when the intersection is proper, the algebraic zero-cycle of periodic points of period $n$ as
        \begin{equation*}
            \Phi_n(\phi) = \sum_{P \in X} a_P(n)(P).
        \end{equation*}
        Following construction (\ref{eq1}), define
        \begin{equation*}
            a_P^{\ast}(n) = \sum_{d \mid n} \mu\left(\frac{n}{d}\right) a_P(d)
        \end{equation*}
        and
        \begin{equation*}
            \Phi_n^{\ast}(\phi) = \sum_{d \mid n} \mu\left(\frac{n}{d}\right)
            \Phi_d(\phi) = \sum_{P \in X} a_{P}^{\ast}(n) (P),
        \end{equation*}
        where $\mu$ is the M\"{o}bius function defined above.
        \begin{defn}
            We call $\Phi^{\ast}_n(\phi)$ the \emph{$n\tth$ dynatomic cycle}\footnote{This term is inspired by ``cyclotomic'' much like ``Tribonacci'' was inspired by ``Fibonacci''.} and $a_P^{\ast}(n)$ the
            \emph{multiplicity} of $P$ in $\Phi^{\ast}_n(\phi)$.  If
            $a_P^{\ast}(n) >0$, then we call $P$ a periodic point of \emph{formal} period $n$.
        \end{defn}
        \begin{rem}
            In the one variable polynomial case, $\Phi^{\ast}_n$ is called a \emph{dynatomic polynomial} and has been studied extensively such as in \cite{Morton2, Morton, Silverman5}.  For a more complete background and additional references in this area see \cite{Silverman10}.
        \end{rem}
        \begin{defn}
            For $n \geq 1$, we say that $\phi^{n}$ is \emph{nondegenerate} if $\Delta$ and $\Gamma_n$
            intersect properly; in other words, if $\Delta \cap \Gamma_n$ is a finite set of points.
        \end{defn}
        \begin{rem}
            If $\phi^n$ is nondegenerate, then $\phi^d$ is nondegenerate for all $d \mid n$ since $\Delta \cap \Gamma_d \subseteq \Delta \cap \Gamma_n$.  Conversely, $\phi$ may be nondegenerate with $\phi^n$ degenerate, such as when $\phi$ is a nontrivial automorphism of a curve with finite order.
        \end{rem}
		We prove the following theorem.
        \begin{thm} \label{thm_main}
            Let $X \subset \mathbb{P}_K^{N}$ be a nonsingular, irreducible, projective variety defined over an algebraically closed field $K$ and let $\phi:X \to X$ be a morphism defined over $K$.  Let $P$ be a point in $X(K)$.  For all $n \geq 1$ such that $\phi^n$ is nondegenerate, $a_P^{\ast}(n) \geq 0$.
        \end{thm}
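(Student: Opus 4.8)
The plan is to reduce to a purely local statement at $P$, deform the germ of $\phi$ until the nearby periodic points become transverse, and then read off $a_P^\ast(n)$ as a count of points.

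\emph{First reduction.} If the minimal period $m$ of $P$ does not divide $n$ (in particular if $P$ is not periodic), then $\phi^d(P)\neq P$ for every $d\mid n$, so $a_P(d)=0$ for all such $d$ and $a_P^\ast(n)=0$. So assume $m\mid n$. Since $a_P(d)=0$ whenever $m\nmid d$, writing $d=me$ the M\"obius sum collapses to $a_P^\ast(n)=\sum_{e\mid(n/m)}\mu\bigl((n/m)/e\bigr)\,a_P(me)$. As $\Gamma_{me}(\phi)=\Gamma_e(\phi^m)$, this is exactly the dynatomic multiplicity at the \emph{fixed} point $P$ of the morphism $\psi:=\phi^m:X\to X$ for the integer $n/m$, and $\psi^{n/m}=\phi^n$ is nondegenerate. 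Hence we may assume henceforth that $\phi(P)=P$.

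\emph{Localization and conservation of number.} Because $X$ is nonsingular we may pick an \'etale neighborhood of $P$ with coordinates $z=(z_1,\dots,z_b)$ centered at $P$ on which $\phi$ restricts to a map fixing $0$; then $a_P(d)=\dim_K\O_{X,P}/(\phi^d(z)-z)$ is finite by nondegeneracy and is computed by $b$ elements forming a regular sequence in the regular local ring $\O_{X,P}$. Introduce a finite-dimensional deformation $\phi_s$ ($s\in S$, $\phi_0=\phi$) — for instance by adding to $\phi$ a general jet of large order — and set $Z_d=\{(z,s):\phi_s^d(z)=z\}$ near $(0,0)$. Since the fibre of $Z_d\to S$ over $0$ is $0$-dimensional near $0$, upper semicontinuity of fibre dimension makes $Z_d$ finite over $S$ near $(0,0)$; being cut out by $b$ equations in a $(b+\dim S)$-fold it is a complete intersection, hence flat over the regular base $S$ by miracle flatness. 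Therefore intersection numbers are conserved, $\sum_{Q\mapsto P}\mathrm{length}_Q(\phi_s^d-\mathrm{id})=a_P(d)$ for all $s$ near $0$ and all $d\mid n$, and summing with M\"obius weights gives $a_P^\ast(n)=\sum_Q a_Q^\ast(n)(\phi_s)$, the sum over points $Q$ near $P$ that are periodic of some period dividing $n$ for $\phi_s$.

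\emph{Genericity and conclusion.} It remains to choose the deformation so that, for general $s$, every such $Q$ is transverse, i.e.\ $D(\phi_s^{d})_Q-I$ is invertible for every $d\mid n$ with $\phi_s^d(Q)=Q$. The locus $B_d\subset S$ where $\phi_s^d-\mathrm{id}$ has a non-reduced zero near $P$ is closed (finiteness of $Z_d\to S$ plus semicontinuity of fibre length), so it suffices that each $B_d$ be a \emph{proper} subvariety of a neighborhood of $0$, equivalently that $Z_d\to S$ be generically smooth there. This is the deformation-theoretic heart of the argument and the step I expect to be the main obstacle: one must show that a general member of an honest deformation of $\phi$ has only simple periodic points near $P$, the subtlety being that $\phi$ may be globally rigid as a self-morphism of $X$, so the deformation and all of the intersection theory must be carried out on the germ. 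Granting this, $\bigcup_{d\mid n}B_d$ is a proper closed subset of a neighborhood of $0$, and for general $s$ each nearby periodic point $Q$ of $\phi_s$ of exact period $e(Q)\mid n$ satisfies $a_Q(d)(\phi_s)=1$ if $e(Q)\mid d$ and $0$ otherwise; the substitution $d=e(Q)d'$ then gives $a_Q^\ast(n)(\phi_s)=\sum_{d'\mid (n/e(Q))}\mu\bigl((n/e(Q))/d'\bigr)$, which is $1$ if $e(Q)=n$ and $0$ if $e(Q)<n$. Hence $a_P^\ast(n)$ equals the number of points near $P$ of exact period $n$ for $\phi_s$, which is $\geq 0$.
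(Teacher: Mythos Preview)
Your proposal follows the same route as the paper: reduce to $P$ a fixed point, pass to the completed local ring, deform the germ of $\phi$, establish flatness so that each $a_P(d)$ splits as a sum over nearby periodic points of the deformed map, and then use generic transversality so that each such point contributes either $0$ or $1$ to the M\"obius sum. The reduction to the fixed-point case and the final M\"obius bookkeeping are essentially verbatim.

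The two diverge exactly at the step you flag as ``the main obstacle.'' The paper does not use a high-dimensional jet deformation; it takes the single one-parameter family
\[
\phi(\mathbf{x},t)=(\phi_1(\mathbf{x})+t,\ldots,\phi_b(\mathbf{x})+t)
\]
over $\Spec K[[t]]$, iterated \emph{after} deforming. Flatness of $\widehat R_P[[t]]/(I_{\Gamma_n(t)}+I_\Delta)$ is checked by hand, by showing $t$ is a non-zerodivisor (nondegeneracy of $\phi^n$ forces the $2b$ generators to be a regular sequence at $t=0$, so no relation can appear); your miracle-flatness argument is a perfectly good substitute and arguably cleaner. For generic simplicity of the nearby periodic points, the paper invokes the Weierstrass Preparation Theorem and standard facts on multiple roots of Weierstrass polynomials: the equations $\phi_i^n(\mathbf{x},t)=x_i$ present the family as a finite branched cover of the $t$-disc of degree $a_P(n)$, and the set of $t$ over which two sheets coincide is thin. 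So the gap you identify is closed not by proving $B_d\subsetneq S$ for an abstract parameter space $S$, but by exhibiting one explicit one-parameter family for which the analytic theory of Weierstrass polynomials supplies generic simplicity directly. Your instinct that everything must be done on the germ because $\phi$ may be globally rigid is exactly the paper's point of view as well.
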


        Recall that we have defined $K$ to be an algebraically closed field, $X/K$ a projective variety of dimension $b$, and $\phi:X \to X$ a morphism defined over $K$. Let $P \in X(K)$ and let $R_P$ be the local ring of $X \times X$ at $(P,P)$ and let $I_{\Delta}, I_{\Gamma_n} \subset R_P$ be the ideals of the diagonal $\Delta$ and the graph of $\phi^n$ $\Gamma_n$, respectively.  We use Serre's definition of intersection multiplicity \cite[Appendix A]{Hartshorne},
        \begin{equation*}
            a_P(n) = i(\Delta,\Gamma_n;P) = \sum_{i=0}^{b -1} (-1)^{i}
            \dim_K(\tor_{i}(R_P/I_{\Delta},R_P/I_{\Gamma_n})).
        \end{equation*}
        In \cite{Hutz1} we first showed that $\tor_i(R_P/I_{\Delta},R_P/I_{\Gamma_n}) =0$ for $i \geq 1$ and then performed a detailed analysis of $\dim_K(\tor_0(R_P/I_{\Delta},R_P/I_{\Gamma_n}))$ which is the codimension of the ideal
        \begin{equation*}
           \dim_K \left(\tor_0(R_P/I_{\Delta},R_P/I_{\Gamma_n})\right) \cong \dim_K \left(R_P/I_{\Delta} \otimes R_P/I_{\Gamma_n}\right) \cong \dim_K \left(R_P/(I_{\Delta} + I_{\Gamma_n})\right).
        \end{equation*}
        The analysis included a detailed description of the coefficients under iteration of a system of multivariate power series.  In the present article, we instead deform a local representation of the map $\phi$ at each point $P \in \Delta \cap \Gamma_n$ and take the limit of the resulting algebraic zero-cycles.  Since effectivity is a local property, we need not consider the existence of a global deformation and are able to take quite simple deformations locally.  In the case $X=\P^N$, we could take a similarly simple global deformation since the deformed map will still be an endomorphism of the space.  The example in Section \ref{sect_proof} is such a case.

        To keep this article self-contained, before examining $\tor_0(R_P/I_{\Delta},R_P/I_{\Gamma_n})$ we first repeat the proofs from \cite{Hutz1} that $\Phi^{\ast}_n(\phi)$ is an algebraic zero-cycle, that $\tor_i(R_P/I_{\Delta},R_P/I_{\Gamma_n}) =0$ for $i \geq 1$, and that $a_P(n) \geq a_P(1)$ for all $n \in \N$.

    \section{Higher $\tor$-modules}
        Since $\phi^n$ is nondegenerate, $\Delta$ and $\Gamma_n$ intersect
        properly.  We also know $X \times X$ has dimension $2b$, $\Delta$ has dimension $b$, and
        $\Gamma_n$ has dimension $b$.  Consequently, $\Phi_n(\phi)$ is an algebraic zero-cycle.  Thus,
        $\Phi^{\ast}_n(\phi)$ is also an algebraic zero-cycle.

        \begin{lem}\textup{\cite[Corollary to Theorem V.B.4]{Serre}}\label{lem1}
            Let $(R,\mathfrak{m})$ be a regular local ring of dimension $b$, and let $M$ and $N$ be two
            non-zero finitely generated $R$-modules such that $M \otimes N$ is of finite
            length.  Then $\tor_i(M,N) = 0$ for all $i>0$ if and only if $M$ and $N$ are
            Cohen-Macaulay modules and $\dim M + \dim N = b$.
        \end{lem}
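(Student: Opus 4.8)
The plan is to prove the two implications separately, using only standard homological facts about the regular local ring $(R,\mathfrak m)$ of dimension $b$: that every finitely generated $R$-module has finite projective dimension $\le b$; the Auslander--Buchsbaum formula $\operatorname{pd} M + \depth M = \depth R = b$; Serre's dimension inequality $\dim M + \dim N \le b$, valid whenever $M\otimes N$ has finite length; and the Peskine--Szpiro acyclicity lemma. Two preliminary observations are used throughout. First, $M\otimes N\neq 0$ by Nakayama (as $M,N\neq 0$), so the hypothesis that $M\otimes N$ has finite length says exactly $\depth(M\otimes N)=0$. Second, $\supp \tor_i(M,N)\subseteq \supp M\cap\supp N=\supp(M\otimes N)=\{\mathfrak m\}$, so each $\tor_i(M,N)$ is of finite length and hence has depth $0$ unless it vanishes.

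\emph{Assume $\tor_i(M,N)=0$ for all $i>0$.} Take minimal free resolutions $F_\bullet\to M$ and $G_\bullet\to N$, of lengths $p=\operatorname{pd} M$ and $q=\operatorname{pd} N$. The total complex $\operatorname{Tot}(F_\bullet\otimes_R G_\bullet)$ is a finite complex of finitely generated free modules concentrated in degrees $0,\dots,p+q$ with homology $\tor_\ast(M,N)$; by hypothesis it is therefore a free resolution of $M\otimes N$, and it is minimal because each entry of its differential is an entry of the differential of $F_\bullet$ or of $G_\bullet$, hence lies in $\mathfrak m$. Thus $\operatorname{pd}(M\otimes N)=p+q$, and three applications of Auslander--Buchsbaum give $\depth(M\otimes N)=\depth M+\depth N-b$; since the left-hand side is $0$, we obtain $\depth M+\depth N=b$. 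Combining this with $\depth M\le\dim M$, $\depth N\le\dim N$, and Serre's inequality, the chain
\[
 b=\depth M+\depth N\le\dim M+\dim N\le b
\]
is forced to consist of equalities, so $M$ and $N$ are Cohen--Macaulay and $\dim M+\dim N=b$.

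\emph{Conversely, assume $M$ and $N$ are Cohen--Macaulay with $\dim M+\dim N=b$.} By Auslander--Buchsbaum, $\operatorname{pd} M=b-\depth M=b-\dim M=\dim N=\depth N$; write $e$ for this common value and let $0\to F_e\to\cdots\to F_0\to M\to 0$ be a minimal free resolution. Tensoring with $N$ produces a complex $0\to F_e\otimes N\to\cdots\to F_0\otimes N$ whose $i\tth$ homology is $\tor_i(M,N)$. Each term $F_i\otimes N$ is a finite direct sum of copies of $N$, so has depth $\ge\depth N=e\ge i$, while each $\tor_i(M,N)$ is of finite length and so has depth $0$ (or vanishes). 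The Peskine--Szpiro acyclicity lemma now forces $\tor_i(M,N)=0$ for every $i\ge 1$, as required.

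Aside from Auslander--Buchsbaum and the acyclicity lemma, both purely formal, the one substantive external input is Serre's dimension inequality, which is what converts the depth identity $\depth M+\depth N=b$ into the dimension identity in the first implication. Its proof is the classical \emph{reduction to the diagonal}: after passing to the completion of $R$ and invoking Cohen's structure theorem, one realizes $M\otimes_R N$ as $M\otimes_k N$ (a module over $R\otimes_k R$, of dimension $\dim M+\dim N$) cut down modulo the kernel of the multiplication map $R\otimes_k R\to R$, an ideal generated by $b$ elements; hence $0=\dim(M\otimes_R N)\ge\dim M+\dim N-b$. This is precisely the algebraic shadow of intersecting $\Gamma_n$ with $\Delta$, so morally it is the same mechanism driving the rest of the paper; checking the two hypotheses of the acyclicity lemma in the converse direction is the only other point that needs care.
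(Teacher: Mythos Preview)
The paper does not prove this lemma at all: it is stated as a citation of Serre's \emph{Local Algebra} (Corollary to Theorem V.B.4) and used as a black box in the proof of Theorem~\ref{thm10}. So there is no ``paper's own proof'' to compare against.

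That said, your argument is correct and is essentially the standard proof one finds in the literature. The forward direction---tensoring minimal free resolutions, observing that the total complex is again minimal when all Tor's vanish, reading off $\operatorname{pd}(M\otimes N)=\operatorname{pd} M+\operatorname{pd} N$, and then combining Auslander--Buchsbaum with Serre's intersection inequality---is exactly how Serre obtains the implication. Your converse via the Peskine--Szpiro acyclicity lemma is also standard; Serre's original presentation is organized somewhat differently (the result is deduced from his analysis of the partial Euler characteristics $\chi_i(M,N)=\sum_{j\ge i}(-1)^{j-i}\operatorname{length}\tor_j(M,N)$), but the underlying homological content is the same, and your route is arguably cleaner for a direct reading.

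One very minor point of presentation: in the converse direction you should note that the terms $F_i\otimes N$ are nonzero (which follows from minimality of $F_\bullet$ and $N\neq 0$), since the acyclicity lemma is usually stated for complexes of nonzero modules with the stated depth bounds. Otherwise the proof is complete.
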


        \begin{thm} \label{thm10}
            Let $X$ be a nonsingular, irreducible, projective variety defined over a
            field $K$ and $\phi:X \to X$ a morphism defined over $K$ such that $\phi^n$
            is nondegenerate.  Let $P \in X(K)$.  Then, $\tor_i(R_P/I_{\Delta},R_P/I_{\Gamma_n}) =0$ for all $i>0$.
        \end{thm}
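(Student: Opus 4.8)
The plan is to deduce Theorem \ref{thm10} directly from Serre's criterion, Lemma \ref{lem1}, applied to the regular local ring $R = R_P$ and the modules $M = R_P/I_{\Delta}$ and $N = R_P/I_{\Gamma_n}$. Since $X$ is nonsingular of dimension $b$, the product $X \times X$ is nonsingular of dimension $2b$, so $R_P$ is a regular local ring of dimension $2b$. First I would dispose of the trivial case: if $(P,P) \notin \Delta \cap \Gamma_n$, equivalently if $P$ is not a periodic point of period $n$, then $(P,P)$ lies outside at least one of the two subvarieties, so one of $M$, $N$ is the zero module, $\tor_i(M,N)$ has empty support, and the vanishing holds for all $i \geq 0$. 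Hence we may assume $(P,P) \in \Delta \cap \Gamma_n$, so that $M$ and $N$ are both nonzero; they are cyclic, hence finitely generated.

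Next I would identify the pieces appearing in Lemma \ref{lem1}. As recorded in the excerpt, $M \otimes_{R_P} N \cong R_P/(I_{\Delta} + I_{\Gamma_n})$ is the local ring of the scheme-theoretic intersection $\Delta \cap \Gamma_n$ at $(P,P)$. Because $\phi^n$ is nondegenerate, $\Delta \cap \Gamma_n$ is a finite set of points, so this ring is Artinian, i.e. $M \otimes N$ has finite length. Thus the standing hypotheses of Lemma \ref{lem1} are satisfied, and it remains to check its two equivalent conditions on the right-hand side: that $M$ and $N$ are Cohen--Macaulay $R_P$-modules, and that $\dim M + \dim N = 2b$.

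For this I would use that the first projection $X \times X \to X$ restricts to isomorphisms $\Delta \xrightarrow{\sim} X$ and $\Gamma_n \xrightarrow{\sim} X$ (the latter via $x \mapsto (x,\phi^n(x))$). Consequently $M = R_P/I_{\Delta} \cong \O_{X,P}$ and $N = R_P/I_{\Gamma_n} \cong \O_{X,P}$ as rings, and since $X$ is nonsingular each is a regular local ring of dimension $b$. A regular local ring is Cohen--Macaulay, and a quotient ring of $R_P$ that is Cohen--Macaulay is in particular a Cohen--Macaulay $R_P$-module, with the same dimension whether regarded as a ring or as an $R_P$-module. Hence $M$ and $N$ are Cohen--Macaulay $R_P$-modules of dimension $b$, and $\dim M + \dim N = 2b = \dim R_P$. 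By Lemma \ref{lem1}, $\tor_i(M,N) = 0$ for all $i > 0$, which is the assertion.

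The whole argument is essentially bookkeeping once Serre's criterion is in hand; the only place the smoothness hypothesis enters essentially is the Cohen--Macaulay verification, and even there the isomorphisms $\Delta, \Gamma_n \cong X$ make it immediate. So I do not anticipate a genuine obstacle: the content lies entirely in Lemma \ref{lem1}, and the work is simply to confirm that the diagonal and the graph are each smooth (hence Cohen--Macaulay) of complementary dimension inside the smooth ambient $X \times X$, and that nondegeneracy supplies the required finite-length condition on $M \otimes N$.
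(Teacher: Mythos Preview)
Your proof is correct and follows the same overall strategy as the paper: both reduce to Lemma~\ref{lem1} by verifying that $R_P$ is regular of dimension $2b$, that $M\otimes N$ has finite length by nondegeneracy, and that $M$ and $N$ are Cohen--Macaulay of dimension $b$.

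The one notable difference is in how the Cohen--Macaulay property is checked. The paper argues algebraically: $I_\Delta$ and $I_{\Gamma_n}$ are each generated by $b$ elements, and since their sum has finite-length quotient the $2b$ generators together form a system of parameters for $R_P$; then Serre's result on quotients of Cohen--Macaulay rings by partial systems of parameters gives that $R_P/I_\Delta$ and $R_P/I_{\Gamma_n}$ are Cohen--Macaulay of dimension $b$. You instead observe geometrically that the first projection identifies both $\Delta$ and $\Gamma_n$ with $X$, so $R_P/I_\Delta \cong R_P/I_{\Gamma_n} \cong \O_{X,P}$, which is regular (hence Cohen--Macaulay) of dimension $b$ by nonsingularity of $X$. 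Your route is slightly more direct and avoids the need to count generators or invoke the system-of-parameters machinery; the paper's route makes the complete-intersection structure of $\Delta$ and $\Gamma_n$ in $X\times X$ more explicit. Either way the content is the same, and you also handle the degenerate case $(P,P)\notin\Delta\cap\Gamma_n$ explicitly, which the paper leaves implicit.
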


        \begin{proof}
            Let $b=\dim{X}$, then we have $\dim{X \times X} = 2b$ and $\dim{\Delta}=\dim{\Gamma_n}=b$.  The ideals $I_{\Delta}$ and $I_{\Gamma_n}$ are each generated by $b$ elements and $\Delta$ and $\Gamma_n$ intersect properly.  Therefore,
            \begin{equation*}
                \dim_K(R_P/(I_{\Delta} + I_{\Gamma_n})) = \text{length}(R_P/I_{\Delta} \otimes R_P/I_{\Gamma_n}) < \infty.
            \end{equation*}

            Thus, the union of the generators of $I_{\Delta}$ and the generators of $I_{\Gamma_n}$ are a system of parameters for $R_P$ \cite[Proposition III.B.6]{Serre}. Consequently, since the local ring $R_P$ is Cohen-Macaulay we can conclude that $R_P/I_{\Delta}$ is Cohen-Macaulay of dimension $b$ \cite[Corollary to Theorem IV.B.2]{Serre}.  Similarly with $I_{\Gamma_n}$, to conclude that $R_P/I_{\Gamma_n}$ is Cohen-Macaulay of
            dimension $b$.

            We have fulfilled the hypotheses of Lemma \ref{lem1} and can conclude the result.
        \end{proof}

        Recall that we can compute the codimension of an ideal from its leading term ideal:
        \begin{equation*}
            K[[X_1,\ldots,X_b]]/I \cong_K \Span(X^{v} \mid X^{v} \not\in LT(I)).
        \end{equation*}

        \begin{lem} \label{lem10}
            Assume $\phi^n$ is nondegenerate.  Then $a_P(n)\geq a_P(1)$ for all $n \in \N$.
        \end{lem}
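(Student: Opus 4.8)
The plan is to rewrite both intersection multiplicities as colengths of ideals in a power series ring and then extract the inequality from a surjection of finite-dimensional $K$-vector spaces. First I would dispose of the degenerate case: if $\phi(P)\neq P$ then $(P,P)\notin\Gamma_1$, so $a_P(1)=0$, while $a_P(n)$, being a length, is nonnegative; the inequality is then trivial. So assume henceforth that $\phi(P)=P$, hence $\phi^j(P)=P$ for all $j\ge 0$.

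Next I would pass to the completed local ring. Choosing local coordinates $x_1,\dots,x_b$ on $X$ centered at $P$ identifies $\widehat{R_P}$ with $K[[x_1,\dots,x_b,y_1,\dots,y_b]]$, under which $I_\Delta=(x_1-y_1,\dots,x_b-y_b)$ and $I_{\Gamma_m}=(y_1-f^m_1(x),\dots,y_b-f^m_b(x))$, where $f=(f_1,\dots,f_b)$ with $f(0)=0$ is the local expansion of $\phi$ and $f^m$ is its $m$-fold iterate. By Theorem \ref{thm10} the higher $\tor$-modules vanish, so $a_P(m)=\dim_K\bigl(R_P/(I_\Delta+I_{\Gamma_m})\bigr)$ for $m=1$ and $m=n$; this colength is finite (since $\phi^n$, and hence also $\phi$, is nondegenerate), so it is unchanged by completion, and eliminating the $y_i$ gives
\[
a_P(m)=\dim_K K[[x]]/\mathfrak{b}_m,\qquad \mathfrak{b}_m:=\bigl(x_1-f^m_1(x),\dots,x_b-f^m_b(x)\bigr).
\]

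The heart of the matter is the inclusion $\mathfrak{b}_n\subseteq\mathfrak{b}_1$: granting it, the quotient map $K[[x]]/\mathfrak{b}_n\twoheadrightarrow K[[x]]/\mathfrak{b}_1$ is surjective, hence $a_P(n)=\dim_K K[[x]]/\mathfrak{b}_n\ge\dim_K K[[x]]/\mathfrak{b}_1=a_P(1)$. Writing $g_i(x):=x_i-f_i(x)$, so $\mathfrak{b}_1=(g_1,\dots,g_b)$, a telescoping sum gives $x_i-f^n_i(x)=\sum_{j=0}^{n-1}\bigl(f^j_i(x)-f^{j+1}_i(x)\bigr)=\sum_{j=0}^{n-1} g_i(f^j(x))$, so it suffices to check that $g_i\circ f^j\in\mathfrak{b}_1$ for each $j\ge 0$. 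I would prove this by induction on $j$, the case $j=0$ being immediate. For the inductive step, apply the elementary fact that for $h\in K[[z_1,\dots,z_b]]$ and any tuples $u,v$ of power series lying in the maximal ideal one has $h(u)-h(v)\in(u_1-v_1,\dots,u_b-v_b)$; with $h=g_i$, $u=f^j(x)$, $v=f^{j-1}(x)$ (legitimate because $f^j(0)=0$) this yields $g_i(f^j(x))-g_i(f^{j-1}(x))\in\bigl(f^j_k(x)-f^{j-1}_k(x):k\bigr)=\bigl(g_k(f^{j-1}(x)):k\bigr)\subseteq\mathfrak{b}_1$ by the inductive hypothesis, and $g_i(f^{j-1}(x))\in\mathfrak{b}_1$ again by induction.

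I expect the inclusion $\mathfrak{b}_n\subseteq\mathfrak{b}_1$ to be the only real obstacle; everything else is bookkeeping. A more conceptual route to the same inclusion is to observe that $\Fix(\phi)$ is a closed subscheme of $\Fix(\phi^n)$ — if $\phi\circ t=t$ then $\phi^n\circ t=t$ — and then to take lengths of the local rings at $P$; I would mention this as an alternative. The points one must not overlook are that every iterate $f^j$ is a bona fide system of power series (this forces the separation of the case $\phi(P)\neq P$, where $f$ need not fix the origin) and that replacing $R_P$ by its completion is harmless (this uses finiteness of the colengths, i.e. nondegeneracy together with Theorem \ref{thm10}).
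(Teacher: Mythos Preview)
Your proposal is correct and follows essentially the same approach as the paper: reduce to the case where $P$ is a fixed point, establish the ideal inclusion $I_{\Gamma_n}+I_\Delta\subseteq I_{\Gamma_1}+I_\Delta$ (equivalently, your $\mathfrak{b}_n\subseteq\mathfrak{b}_1$ after eliminating the $y_i$), and deduce the dimension inequality. The paper compresses the ideal inclusion into one sentence (``iterating this representation involves taking combinations of the $\phi_i$'') and phrases the dimension comparison via leading term ideals, whereas you spell out the inclusion with a telescoping sum and an induction and then take the surjection of quotients directly; these are cosmetic differences, not substantive ones.
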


        \begin{proof}
            If $P$ is not a fixed point of $\phi$, then the statement is trivial, so assume that $\phi(P)=P$.  It is clear that
            \begin{equation*}
               \Delta \cap \Gamma_1  \subseteq \Delta \cap \Gamma_n
            \end{equation*}
            and we have a local representation of $\phi = [\phi_1,\ldots,\phi_b]$ at the fixed point $P$.  Iterating this representation involves taking combinations of the $\phi_i$ and hence are all elements of the original ideal $I_{\Gamma_1}$.  Hence, we have
            \begin{equation*}
                I_{\Gamma_n} + I_{\Delta} \subseteq  I_{\Gamma_1} + I_{\Delta}.
            \end{equation*}
            Therefore,
            \begin{equation*}
                LT(I_{\Gamma_n} + I_{\Delta}) \subseteq LT(I_{\Gamma_1} + I_{\Delta})
            \end{equation*}
            which implies $a_P(n) \geq a_P(1)$.
        \end{proof}

\section{Proof of Theorem \ref{thm_main}} \label{sect_proof}
    The method is to deform the local intersection at $P$ into a flat family of algebraic zero-cycles whose generic member is a transverse intersection.  Then we take the limit of these algebraic zero-cycles to conclude the theorem.  The main references are flat families \cite[\S 6]{Eisenbud2}, families of algebraic cycles \cite[\S 10-11]{Fulton}, and basic analytic geometry in several complex variables \cite[\S1.4]{Dangelo}.

    Effecitivity is a local property,  so consider a point $P \in X(K)$.  In what follows, we work over the completion $\widehat{R_P}\cong K[[x_1,\ldots,x_b,y_1,\ldots,y_b]]=K[[\textbf{x},\textbf{y}]]$ so that we may consider our problem over a local power series ring.  Locally at $P$ we may write $\phi(\textbf{x})$ as a system of power series denoted $[\phi_1(\textbf{x}), \ldots, \phi_b(\textbf{x})]$.  We wish to deform an algebraic zero-cycle, so we consider $Z_{n,P}$ to be the algebraic zero-cycle obtained by intersecting the local equations for the diagonal $(x_1-y_1,\ldots,x_b-y_b)$ and the graph $(y_1-\phi_1^n(\textbf{x}), \ldots, y_b-\phi_b^n(\textbf{x}))$ as analytic varieties.  We have now reduced the problem to a multiplicity question on an analytic variety $Z_{n,P}$ and we can deform in this situation without concern for the global structure on $X$.  We deform $Z_{n,P}$ by considering the iterates of \[\phi(\textbf{x},t)=[\phi_1(\textbf{x})+t,\ldots,\phi_b(\textbf{x})+t]\]
    for a parameter $t \in \A^1_K$ and their graphs denoted $\Gamma_n(t)$.  Note that $\phi^n(\textbf{x},0)=\phi^n(\textbf{x})$.   We denote the deformed family as $Z_{n,P}(t)$.  Notice that we are deforming and then iterating so that $Z_{n,P}(t)$ is associated to $(\phi(\textbf{x},t))^n$.  It is important that $P$ be a fixed point so that a local representation of $\phi^n$ at $P$ is given by the iterate of the local representation of $\phi$ at $P$. Before beginning the proof, we illustrate the method with the one dimensional polynomial example mentioned in the introduction.
    \begin{exmp}
        Consider $\phi(x) = x^2-3/4$.  The fixed points are determined as
        \begin{equation*}
            \phi(x) - x = (x+1/2)(x-3/2) =0
        \end{equation*}
        and are both multiplicity one, $a_{-1/2}(1) =a_{3/2}(1)=1$.
        After deforming, the fixed points as functions of $t$ are determined as
        \begin{equation*}
            \phi(x,t) -x = x^2 - x -3/4 + t =0
        \end{equation*}
        with two distinct multiplicity one solutions, unless $t=1$,
        \begin{equation*}
            P_1(t) = \frac{1}{2}(1+2\sqrt{1-t}) \quad \text{and} \quad P_2(t) = \frac{1}{2}(1-2\sqrt{1-t}).
        \end{equation*}
        The $2$-periodic points are determined as
        \begin{equation} \label{eq4}
            \phi^2(x) - x = (x+1/2)^3(x-3/2)=0.
        \end{equation}
        The solutions are not distinct and they are both fixed points.  In particular, $a_{-1/2}(2) = 3$ and $a_{3/2}(2)=1$.  This causes $a_{-1/2}^{\ast}(2) = 2$ even though $-\frac{1}{2}$ is a fixed point.  It is this higher multiplicity counting that makes the statement of effectivity interesting and the complication that this deformation argument seeks to circumvent.  After deforming, the $2$-periodic points are determined as
        \begin{equation*}
            \phi^2(x,t)-x = x^4 + (2t - 3/2)x^2 -x + (t^2 - 1/2t - 3/16)=0
        \end{equation*}
        with four distinct multiplicity one solutions, unless $t=1$ or $t=0$,
        \begin{align*}
            P_1(t) &= \frac{1}{2}(1+2\sqrt{1-t}), \qquad P_2(t) = \frac{1}{2}(1-2\sqrt{1-t}),\\
            P_3(t) &= -\frac{1}{2}(1-2\sqrt{-t}), \quad \text{and} \quad P_4(t) = -\frac{1}{2}(1+2\sqrt{-t}).
        \end{align*}
        There are two main facts to notice about these four points.  First, as $t \to 0$, the points correspond in multiplicity to the undeformed system (\ref{eq4})
        \begin{equation*}
            P_1(0) = \frac{3}{2} \qquad P_2(0)=P_3(0)=P_4(0)=-\frac{1}{2}.
        \end{equation*}
        Secondly, $P_1(t)$ and $P_2(t)$ are fixed points, while $P_3(t)$ and $P_4(t)$ are periodic points with minimal period $2$ and all four occur with multiplicity one.  Examining the dynatomic multiplicities we have
        \begin{align*}
            a_{P_1(t)}^{\ast}(2) &= a_{P_2(t)}^{\ast}(2)=0\\
            a_{P_3(t)}^{\ast}(2) &= a_{P_4(t)}^{\ast}(2) = 1
        \end{align*}
        for a total of
        \begin{equation*}
            a_{-1/2}^{\ast}(2) = a_{P_2(t)}^{\ast}(2) +  a_{P_3(t)}^{\ast}(2) + a_{P_4(t)}^{\ast}(2) = 2
        \end{equation*}
        as computed directly above.
    \end{exmp}
    We start by showing that $Z_{n,P}(t)$ is a flat family using the following local criteria for flatness.
    \begin{lem}\textup{\cite[Corollary 6.9]{Eisenbud2}} \label{lem_flatness}
        Suppose that $(R,\mathfrak{m})$ is a local Noetherian ring.  Let $x \in R$ be a non-zero divisor on $R$ and let $M$ be a finitely generated $R$-module.  If $x$ is a non-zero divisor on $M$, then $M$ is flat over $R$ if and only if $M/xM$ is flat over $R/(x)$.
    \end{lem}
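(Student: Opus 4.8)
The plan is to split the stated equivalence into its two halves. The forward implication is formal: if $M$ is flat over $R$, then $M/xM \cong M \otimes_R R/(x)$ is a base change of $M$, and a base change of a flat module is flat, so $M/xM$ is flat over $R/(x)$; this direction uses nothing about $x$ being a non-zero divisor. The substantive converse I would attack via a change-of-rings computation in $\tor$, and for this it is harmless to assume $x \in \mathfrak m$ (a non-unit).

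Write $\bar R = R/(x)$, which is again Noetherian local, so that $M/xM$ is finitely generated over $\bar R$. The key step is that if $F_\bullet \to M$ is a free $R$-resolution, then $F_\bullet \otimes_R \bar R$ is a free $\bar R$-resolution of $M/xM$: its homology computes $\tor^R_\bullet(M,\bar R)$, and resolving $\bar R$ by $0 \to R \xrightarrow{x} R \to \bar R \to 0$ gives $\tor^R_i(M,\bar R) = 0$ for $i \geq 2$ and $\tor^R_1(M,\bar R) = \ker(x \colon M \to M) = 0$, the last equality holding precisely because $x$ is a non-zero divisor on $M$. Tensoring this resolution further with an arbitrary $\bar R$-module $N$ then produces natural isomorphisms
\begin{equation*}
    \tor^{\bar R}_i(M/xM, N) \cong \tor^R_i(M, N) \qquad \text{for all } i \geq 0.
\end{equation*}

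To conclude, take $N = R/\mathfrak m$, which is an $\bar R$-module since $x \in \mathfrak m$. If $M/xM$ is flat over $\bar R$, it is free, being finitely generated over a Noetherian local ring, so $\tor^{\bar R}_1(M/xM, R/\mathfrak m) = 0$; the displayed isomorphism then forces $\tor^R_1(M, R/\mathfrak m) = 0$, and the local criterion for flatness over a Noetherian local ring --- finitely generated flat $\Leftrightarrow$ free $\Leftrightarrow$ vanishing of $\tor_1$ against the residue field --- yields that $M$ is flat over $R$. The only non-formal ingredient is the resolution-lifting step, namely the vanishing $\tor^R_{i}(M,\bar R) = 0$ for $i \geq 1$; once that is in hand the change-of-rings isomorphism and the $\tor_1$-characterisation of flatness finish the argument mechanically. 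This is classical, and the statement is recorded as \cite[Corollary 6.9]{Eisenbud2}.
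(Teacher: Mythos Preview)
The paper does not supply a proof of this lemma; it is quoted as a black box from Eisenbud (\cite[Corollary 6.9]{Eisenbud2}) and then invoked in the proof of Proposition~\ref{prop_flat}. Your argument is correct and is essentially the standard one found in that reference: the forward direction is the stability of flatness under base change, and for the converse you use that $x$ being a non-zero divisor on $M$ forces $\tor^R_{\geq 1}(M,R/(x))=0$, so a free $R$-resolution of $M$ base-changes to a free $\bar R$-resolution of $M/xM$, giving $\tor^{\bar R}_i(M/xM,N)\cong\tor^R_i(M,N)$ for every $\bar R$-module $N$; taking $N=R/\mathfrak m$ and applying the $\tor_1$-criterion for flatness of finitely generated modules over a Noetherian local ring finishes it. One small remark: your aside that ``it is harmless to assume $x\in\mathfrak m$'' deserves a word of justification, since if $x$ were a unit the hypothesis $M/xM$ flat over $R/(x)=0$ is vacuous and the conclusion can fail; in the paper's application $R=K[[t]]$ and $x=t$, so this is not an issue.
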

    \begin{prop} \label{prop_flat}
        Let $n \in \N$ be such that $\phi^n$ is non-degenerate and let $P \in X(K)$.  The family $Z_{n,P}(t)$ is flat over $K[[t]]$.
    \end{prop}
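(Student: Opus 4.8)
The plan is to realize the total space of the family as the spectrum of a single complete local ring and then to check flatness over $K[[t]]$ using the criterion of Lemma~\ref{lem_flatness}. Eliminating the variables $\textbf{y}$ against the diagonal equations $x_i - y_i$, the coordinate ring of the family $Z_{n,P}(t)$ is
\[
  M := K[[\textbf{x},t]]/(g_1,\dots,g_b), \qquad g_i := x_i - \phi_i^n(\textbf{x},t),
\]
viewed as a module over $R := K[[t]]$ via the parameter $t$; here $g_i$ specializes to $x_i - \phi_i^n(\textbf{x})$ at $t=0$, and since $P$ is a fixed point of $\phi$ each $g_i$ lies in the maximal ideal $(\textbf{x},t)$, so $M$ is local. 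Applying Lemma~\ref{lem_flatness} with $(R,\mathfrak{m}) = (K[[t]],(t))$ and $x = t$ (a nonzerodivisor on $R$) reduces the proposition to two facts: (i) $M$ is a finitely generated $K[[t]]$-module, and (ii) $t$ is a nonzerodivisor on $M$. Granting these, $M$ is flat over $K[[t]]$ because $M/tM$ is automatically flat over $K[[t]]/(t) = K$.

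For (i), I would first identify the special fibre,
\[
  M/tM \cong K[[\textbf{x}]]/(x_1 - \phi_1^n(\textbf{x}),\dots,x_b - \phi_b^n(\textbf{x})),
\]
as the complete local ring of $\Delta \cap \Gamma_n$ at $P$. Since $\phi^n$ is nondegenerate, $P$ is an isolated point of $\Delta \cap \Gamma_n$, so this ring is Artinian and in particular $\dim_K M/tM < \infty$. As $M$ is a quotient of the $t$-adically complete Noetherian ring $K[[\textbf{x}]][[t]]$, it is itself $t$-adically complete and separated, so any lift to $M$ of a $K$-basis of $M/tM$ generates $M$ over $K[[t]]$; this gives (i).

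For (ii), the crucial point is that $\dim M = 1$. On one hand, Krull's height theorem bounds every minimal prime of $(g_1,\dots,g_b)$ in the regular --- hence catenary --- local ring $K[[\textbf{x},t]]$ of dimension $b+1$ by height $\le b$, so $\dim M \ge 1$. On the other hand, if $\mathfrak{q}$ is a minimal prime of $M$, then in the Noetherian local domain $K[[\textbf{x},t]]/\mathfrak{q}$ the quotient by $t$ is again a quotient of the Artinian ring $M/tM$; hence $K[[\textbf{x},t]]/\mathfrak{q}$ has dimension $0$ if $t \in \mathfrak{q}$ and dimension $1$ otherwise, so $\dim M \le 1$. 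Since $K[[\textbf{x},t]]$ is Cohen--Macaulay, the ideal $(g_1,\dots,g_b)$ then has height $b = (b+1) - \dim M$, so $g_1,\dots,g_b$ is a regular sequence and $M$ is Cohen--Macaulay of dimension $1$. Being Cohen--Macaulay, $M$ has no embedded primes, and each of its associated primes $\mathfrak{q}$ satisfies $\dim K[[\textbf{x},t]]/\mathfrak{q} = 1$; by the dimension computation just made, no such $\mathfrak{q}$ contains $t$, so $t$ is a nonzerodivisor on $M$, which is (ii).

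I expect the main obstacle to be the dimension count $\dim M = 1$: one must know that passing from $\phi^n$ to the deformation $(\phi(\textbf{x},t))^n$ and adjoining $t$ raises the local dimension by exactly one, so that the defining equations remain a regular sequence. This is precisely where the nondegeneracy of $\phi^n$ enters --- it forces the special fibre $M/tM$ to be Artinian --- and the Cohen--Macaulay conclusion it produces is what promotes ``$\dim M = 1$'' to ``$t$ is a nonzerodivisor on $M$'', ruling out torsion concentrated on the fibre $t = 0$. The remaining steps, namely the finite generation in (i) and the formal invocation of Lemma~\ref{lem_flatness}, are routine.
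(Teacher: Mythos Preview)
Your argument is correct, and it follows the paper's overall plan: both proofs invoke Lemma~\ref{lem_flatness} with $R=K[[t]]$, $x=t$, and $M$ the coordinate ring of the deformed family, and both reduce to checking that $t$ is a nonzerodivisor on $M$. The difference lies in how that last point is verified. The paper argues directly: if $tm\in I_{\Gamma_n(t)}+I_\Delta$ with $m\notin I_{\Gamma_n(t)}+I_\Delta$, write $tm=\sum a_i m_i$, set $t=0$, and observe that either all $(a_i)_{t=0}$ vanish (whence $m=\sum(a_i/t)m_i$ lies in the ideal after all) or else the $(m_i)_{t=0}$ satisfy a nontrivial relation, which is declared to contradict nondegeneracy. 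You instead run a dimension count to see that $\dim M=1$, conclude that $g_1,\dots,g_b$ is a regular sequence in the regular local ring $K[[\textbf{x},t]]$, and deduce that $M$ is Cohen--Macaulay of dimension~$1$ with no associated prime containing $t$.

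Your route is a bit longer but also more self-contained: the paper's phrase ``there is a relation among the $(m_i)_{t=0}$, which contradicts nondegeneracy'' really amounts to the assertion that the specialized generators form a regular sequence (Koszul relations always exist, so a mere relation is not by itself a contradiction), and your Cohen--Macaulay argument is exactly what justifies that assertion. You also make explicit the finite-generation hypothesis of Lemma~\ref{lem_flatness} --- that $M$ is finite over $K[[t]]$ --- via completeness and the Artinian special fibre, a point the paper leaves implicit. So the two proofs are organized around the same lemma, but yours supplies the commutative-algebra backbone (regular sequence, unmixedness) that the paper's syzygy argument tacitly relies on.
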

    \begin{proof}
        Recall that $Z_{n,P} = a_{P}(n)(P)$ with $a_P(n) = \dim_K \widehat{R}_P/(I_{\Gamma_n} + I_{\Delta})$.  Thus, to show flatness for $Z_{n,P}(t)$, we need to show flatness for $\widehat{R}_P[[t]]/(I_{\Gamma_n(t)} + I_{\Delta})$.

        We apply Lemma \ref{lem_flatness} with $M = \widehat{R}_P[[t]]/(I_{\Gamma_n(t)} + I_{\Delta})$, $R = K[[t]]$, and $x=t$.

        We see that $M/tM \cong \widehat{R}_P/(I_{\Gamma_n} + I_{\Delta})$ from our choice of deformation and $K[[t]]/(t) \cong K$.  Thus, $M/tM$ is a flat $K$-module since it is a finite dimensional $K$-vector space by the nondegeneracy of $\phi^n$.  Now, we just need to show that $t$ is not a zero divisor on $M$.

        Assume that $t$ is a zero divisor.  Then, there exists an $m \in M$ with $m \neq 0$ such that $tm =0$.  In particular there exist $a_i \in K[[t]]$ such that \[tm = \sum_{i=1}^{2b} a_i m_i,\] where $m_i$ are the generators of $(I_{\Gamma_n(t)} + I_{\Delta})$.  Specializing to $t=0$, we must have \[\left(\sum_{i=1}^{2b} a_i m_i\right)_{t=0} = 0,\] with $(m_i)_{t=0} \neq 0$ for all $i$.  Assume that $(a_i)_{t=0} = 0$ for all $i$, then we have
         \[
            \sum_{i=1}^{2b} \frac{a_i}{t}m_i = m
         \]
         with $\frac{a_i}{t} \in K[[t]]$.  This contradicts $m \not\in (I_{\Gamma_n(t)} + I_{\Delta})$.  So we have at least one $(a_i)_{t=0} \neq 0$ and, hence, there is a relation among the $(m_i)_{t=0}$, which contradicts the assumption that $\phi^n$ is non-degenerate.
    \end{proof}

    Recall that $P$ is a (possibly high multiplicity) solution to the set of equations $\phi_i(\textbf{x}) = x_i$.  We have perturbed this system to obtain the set of equations $\phi_i(\textbf{x},t) = x_i$ for $i=1,\ldots, b$.  We will use the Weierstrass Preparation Theorem to obtain distinct solutions $P_j(t)$ as power series in $t$ for some set of $j=1,\ldots,a_P(n).$

    \begin{proof}[Proof of Theorem \ref{thm_main}]
        We fix $n$ and consider each point $P \in X(K)$.  If $P$ is not periodic of period $n$, then we have $a_d(P) = 0$ for all $d \mid n$ and hence $a_P^{\ast}(n) = 0$.  So we may assume that $P$ is periodic of some minimal period $m \mid n$.  Replacing $P$ with $\phi^m(P)$, $\phi$ with $\phi^m$, and $n$ with $n/m$ we may assume that $P$ is a fixed point of $\phi$.  Working locally, we consider the family of algebraic zero-cycles $Z_{n,P}(t)$ defined above.  By Proposition \ref{prop_flat} this is a flat family and, thus, by \cite{Lazarsfeld} we have that
        \[ \lim_{t \to 0} Z_{n,P}(t) = Z_{n,P}(0) = Z_{n,P}. \]
        In particular, if $P_j(t)$ are the points in the support of $Z_{n,P}(t)$ which go to $P$ as $t \to 0$, then if we write the algebraic zero-cycle as
        \[ Z_{n,P}(t) = \sum_j a_{P_j(t)}(n) (P_j(t)), \]
        we have that
        \[ a_P(n) = \sum_j a_{P_j(t)}(n) \quad \text{and} \quad a_P^{\ast}(n) = \sum_j a_{P_j(t)}^{\ast}(n). \]
        Note that each $P_j(t)$ is periodic with minimal period dividing $n$ and there are finitely many such $P_j(t)$, in fact by flatness, there are $a_P(n)$ of them counted with multiplicity.  From standard results in the theory of analytic varieties in several complex variables concerning the Weierstrass Preparation theorem and multiple roots of Weierstrass polynomials \cite[\S 1.4]{Dangelo}, we know that the set of $t$ values for which there is a solution $P_j(t)$ with multiplicity greater than one is a thin set.  In particular, generically there are $a_P(n)$ distinct $P_j(t)$ which satisfy $P_j(0)=P$.  Finally, $a_P(d) \leq a_P(n)$ for $1 \leq d \leq n$ by Lemma \ref{lem10}.  Thus, by avoiding a thin set of $t$, for each $d \mid n$ each $P_j(t)$ occurs with multiplicity $1$ in $Z_{d,P}(t)$ if it has minimal period dividing $d$ and multiplicity $0$ otherwise.  Using properties of the M\"obius function, we compute for $P_j(t)$ with minimal period $m < n$
        \begin{equation*}
            a_{P_j(t)}^{\ast}(n) = \sum_{d \mid n} \mu\left(\frac{n}{d}\right) a_{P_j(t)}(d) = \sum_{d \mid \frac{n}{m}} \left(\frac{n}{dm}\right) a_{P_j(t)}(d) = \sum_{d \mid \frac{n}{m}} \mu\left(\frac{n}{dm}\right) = 0.
        \end{equation*}
        For $P_j(t)$ with minimal period $n$ we compute
        \begin{equation*}
            a_{P_j(t)}^{\ast}(n) = \sum_{d \mid n} \mu\left(\frac{n}{d}\right) a_{P_j(t)}(d) = a_{P_j(t)}(n) = 1.
        \end{equation*}
        Thus, any $P_j(t)$ with minimal period strictly less than $n$ must contribute $0$ to $a_P^{\ast}(n)$ and any $P_j(t)$ with minimal period $n$, must contribute positively.  In particular, $a_P^{\ast}(n) \geq 0$.
    \end{proof}

\providecommand\biburl[1]{\texttt{#1}}

\end{document}